\newtheorem{theorem}{Theorem}
\newtheorem{lemma}[theorem]{Lemma}
\newtheorem{remark}{Remark}
\newtheorem{problem}[theorem]{Problem}
\begin{document}
	\author{Sergii Myroshnychenko, Kateryna Tatarko, and Vladyslav Yaskin}

	\address{S.~Myroshnychenko, Department of Mathematics and Statistics,  University of the Fraser Valley, Abbotsford, BC V2S 7M8,  Canada}
	
	\email{serhii.myroshnychenko@ufv.ca}

	\address{K.~Tatarko, Department of Pure Mathematics, University of Waterloo, Waterloo, ON N2L 3G1, Canada}
	
	\email{ktatarko@uwaterloo.ca}

	\address{V.~Yaskin, Department of Mathematical and Statistical Sciences, University of Alberta, Edmonton, AB T6G 2G1, Canada}
	
	\email{yaskin@ualberta.ca}

	\thanks{The authors were supported in part by NSERC}

	\subjclass[2020]{Primary 52A20, 52A40}
	
	\keywords{Convex body, section, centroid, Fourier transform, non-intersection body}
	
\title[Centroids of sections]{Answers to questions of Gr\"unbaum and Loewner}

\begin{abstract} 
	We construct a convex body $K$  in $\mathbb{R}^n$, $n \geq 5$, with the property that there is exactly one hyperplane $H$ passing through $c(K)$, the centroid of $K$, such that the centroid of $K\cap H$ coincides with $c(K)$. This provides answers to questions of Gr\"unbaum and Loewner for $n\geq 5$. The proof is based on the existence of non-intersection bodies in these dimensions. 
\end{abstract}

\maketitle

\section{Introduction}

Let $K$ be a convex body in $\mathbb{R}^n$, i.e., a compact convex set with non-empty interior. The \textit{centroid} of $K$ (also known as the center of mass or barycenter) is the point
$$
c(K) = \frac{1}{|K|} \int_K x \, dx,
$$
where integration is with respect to Lebesgue measure and $|K|$ denotes the volume of $K$.

Consider the subset $\mathcal{G}$ of the interior of $K \subset \mathbb{R}^n$ with the property that each point $p$ from this set is the centroid of at least $n + 1$ different $(n-1)$-dimensional sections of $K$ through $p$. Gr\"unbaum 
\cite{Gr1} asked whether $\mathcal{G}$ is non-empty for every convex body $K$ and, in particular, whether the centroid $c(K)$ of $K$ belongs to $\mathcal{G}$; see also \cite{Gr2} and \cite[A8]{CFG}.

\begin{problem}\label{GrunbaumProblem} (Gr\"unbaum)
	Is the centroid $c(K)$ of $K \subset \mathbb{R}^n$ the centroid of at least $n + 1$ different $(n-1)$-dimensional sections of $K$ through $c(K)$?
\end{problem}

A more general question was asked by Loewner \cite[Problem 28]{F} a few years later.

\begin{problem}\label{LoewnerProblem} (Loewner)
	Let $\mu(K)$ be the number of hyperplane sections of $K$ passing through $c(K)$ whose centroid is the same as $c(K)$. Let $$\mu(n) =\displaystyle \min_{K\in \mathcal K^n} \mu(K),$$ where $\mathcal K^n$ is the class of all convex bodies in $\mathbb R^n$. What is the value of $\mu(n)$?

\end{problem}

As indicated by Gr\"unbaum \cite{Gr1} and Loewner \cite{F},  it is easy to show that Problem~\ref{GrunbaumProblem} has a positive answer in $\mathbb R^2$.
We include a proof in Section \ref{2D} for completeness. In \cite[\S6.2]{Gr2}, Gr\"unbaum claimed that $\mathcal{G} \neq \emptyset$ for every convex body $K\subset \mathbb R^n$, $n\ge 3$, however, Pat\'akov\'a, Tancer and Wagner \cite{PTW} discovered that one of the auxiliary statements in Gr\"unbaum's argument is incorrect. Additionally they proved that every convex body $K \subset \mathbb{R}^n$, $n \geq 3$, contains a point $p$ that is the centroid of at least four hyperplane sections through $p$. Thus, $\mathcal{G} \neq \emptyset$ for every convex body $K \subset \mathbb{R}^3$, while this is still unknown in dimensions $n \geq 4$.

Gr\"unbaum \cite{Gr1} has shown that every point in the interior of a convex body is the centroid of at least one section; see also \cite[p. 352]{MR}.  
Thus $\mu(n)\ge 1$ for all $n\ge 3$. 
In this paper we show that $\mu(n)=1$ for $n\ge 5$. In particular,  Problem~\ref{GrunbaumProblem} has a negative answer for $n \geq 5$.  We  construct a convex body of revolution $K$ satisfying $\mu(K)=1$ using Fourier analytic tools and exploiting the fact that there are origin-symmetric convex bodies that are non-intersection bodies in $\mathbb{R}^n$ for $n \geq 5$.

\section{Preliminaries}
We say that a compact set $K \subset \mathbb{R}^n$ is \textit{star-shaped} about the origin $0$ if for every point $x \in K$ each point of the interval $[0, x)$ is an interior point of $K$. The \textit{Minkowski functional} of $K$ is defined by
$$
\|x\|_K = \min\{\lambda \geq 0:  x \in \lambda K \}.
$$
 $K$ is called a \textit{star body} if it is compact, star-shaped about the origin and its Minkowski functional is a continuous function on $\mathbb{R}^n$.

The \textit{radial function} of a star body $K$ is defined by
$$
\rho_K(\xi) = \max \{\lambda > 0: \lambda \xi \in K \}, \quad \xi \in S^{n-1}.
$$
It is clear that $\rho_K(\xi) = \|\xi\|_K^{-1}$ for any $\xi$ on the unit sphere $S^{n-1}$ and $\rho_K$ is positive and continuous on $S^{n-1}$.

We say that $K$ is \textit{origin-symmetric} if $x\in K$ if and only if $-x \in K$. For an origin-symmetric star body $K$ we have $\rho_K(\xi)=\rho_K(-\xi) $ for all $\xi\in S^{n-1}$.

One of the main tools in this paper is the Fourier transform of distributions. The reader is referred to \cite{GS} and \cite{K2} for background information. Let $\mathcal S(\mathbb R^n)$ be the Schwartz space of rapidly decreasing infinitely differentiable functions on $\mathbb R^n$, called test functions. By $\mathcal S'(\mathbb R^n)$ we denote the space of distributions, i.e., continuous linear functionals on $\mathcal S(\mathbb R^n)$. For a test function $\varphi \in  \mathcal S(\mathbb R^n)$, its Fourier transform is given  by 
$$\widehat{\varphi}(x) = \int_{\mathbb R^n} \varphi(y) e^{-i \langle x,y\rangle}\, dy.$$
For a distribution $f\in  \mathcal S'(\mathbb R^n)$ its Fourier transform $\widehat{f}$ (also denoted by $f^\wedge$) is a distribution defined by
$$\langle \widehat{f}, \varphi\rangle = \langle {f}, \widehat{\varphi}\rangle,$$
for every $\varphi \in \mathcal S(\mathbb R^n)$. Note that for any even distribution $f$ we have $\widehat{\widehat f\, \,}=(2\pi)^n f$. 

If $f$ is a homogeneous function of degree $-k$ on $\mathbb R^n\setminus \{0\}$, where $0<k<n$, then $f$ can be thought of as a distribution acting on test functions by integration. In addition, if $f\in C^\infty (\mathbb R^n\setminus \{0\})$, then  $\widehat{f}$ is a distribution that acts on test functions as a homogeneous function of degree $-n+k$ which is   infinitely differentiable on  $\mathbb R^n\setminus \{0\}$; see \cite[Section 2.9]{E} or \cite[Lemma 3.6]{K2}. From now on, we will identify $\widehat{f}$ with this function. If $f$ is an even homogeneous function of degree $-n+1$, then its Fourier transform restricted to the sphere can be computed as follows (see \cite[Lemma 3.7]{K2}):
\begin{equation}\label{FR}
\widehat{f}(\xi) = \pi \int_{S^{n-1}\cap \xi^\perp} f(x) \, dx, \qquad \xi\in S^{n-1},
\end{equation}
where $\xi^\perp = \{x\in\mathbb R^n: \langle x,\xi\rangle=0\}$ is the hyperplane through the origin orthogonal to the vector $\xi$, and $\langle \cdot,\cdot \rangle$ stands for the standard inner product on $\mathbb{R}^n$.

We will denote the Euclidean norm of a vector  $x\in \mathbb R^n$ by $|x|$. 
The Fourier transform of $|x|^{-1}$ equals (see \cite[p.~363]{GS})
\begin{equation}\label{EuclNorm}
\left(|\cdot|^{-1}\right)^\wedge(x) = c_n |x|^{-n+1},
\end{equation}
where \begin{equation}\label{cn}
	c_n =  {2^{n-1} \pi^{\frac{n-1}2} \Gamma\left(\frac{n-1}2\right)} .\end{equation}

Note that if $T$ is an invertible linear transformation on $\mathbb R^n$, then 
\begin{equation}\label{LinTran}
(|Tx|^{-1})^\wedge(y) = c_n |\det T|^{-1} |T^{-t} y|^{-n+1},
\end{equation}
where $T^{-t}$ denotes the inverse of the transpose of $T$.

Let $f$ and $g$ be infinitely differentiable even functions on  $\mathbb R^n\setminus \{0\}$. Assume that $f$ and $g$ are homogeneous of degrees $-k$ and $-n+k$, respectively, for some $0<k<n$. Then  the following spherical Parseval formula holds:
\begin{equation}\label{Parseval}\int_{S^{n-1}} \widehat{f}(\xi) g(\xi)\, d\xi = \int_{S^{n-1}} f(\xi) \widehat{g}(\xi)\, d\xi.
\end{equation}
This formula is proven in \cite[Lemma 3.22]{K2} in the case when $f=\|\cdot\|_K^{-k}$ and $g=\|\cdot\|_L^{-n+k}$ for some star bodies $K$ and $L$, but this immediately yields the case of general functions. Alternatively, one can prove (\ref{Parseval}) using spherical harmonics; see \cite{GYY} for details.

The notion of the  intersection body of a star body was first introduced by Lutwak \cite{L} and has played an important role in convex geometry. 
Let $K$ and $L$ be origin-symmetric star bodies in $\mathbb R^n$.   We say that $K$ is the \textit{intersection body of} $L$ if 
$$\rho_{K}(\xi) = |L\cap \xi^\perp|,\quad \mbox{for all } \xi \in S^{n-1}.$$
Passing to polar coordinates, we see that $$\rho_{K}(\xi) = \frac{1}{n-1}\int_{S^{n-1}\cap \xi^\perp}\rho_L^{n-1}(x) \, dx.$$
By formula (\ref{FR}) this means that 
$$\|\xi\|_K^{-1} = \frac{1}{\pi(n-1)} \left(\|\cdot\|_L^{-n+1}\right)^\wedge(\xi),$$
which implies that $$\left(\|\cdot\|_K^{-1}\right)^\wedge(\xi) =\frac{(2\pi)^n}{\pi(n-1)} \ {\|\xi\|_L^{-n+1}} >0.$$

More generally, $K$ is called an \textit{intersection body} if $\rho_K$ is the spherical Radon transform (also known as the Minkowski-Funk transform) of a positive measure on $S^{n-1}$; see \cite{GLW}. Alternatively, the class of intersection bodies can be defined as the closure of the class of intersection bodies of star bodies in the radial metric. Intersection bodies were a key ingredient in the solution of the celebrated Busemann--Petty problem; the connection was discovered by Lutwak \cite{L}. 
The following Fourier analytic characterization of intersection bodies was obtained by Koldobsky \cite{K1}. An origin-symmetric star body in $\mathbb R^n$ is an intersection body if and only if $\|\cdot\|_K^{-1}$ is a positive definite distribution, i.e., the Fourier transform of $\|\cdot\|_K^{-1}$   is a positive distribution. All origin-symmetric convex bodies in dimensions 2, 3, and 4 are intersection bodies, while in dimensions $n\ge 5$ there are non-intersection bodies. Together with the Lutwak connection this gives a positive answer to the Busemann-Petty problem in dimensions $n\le 4$ and a negative answer in dimensions $n\ge 5$ (see \cite{K2} for the history of the problem and a Fourier analytic solution). For various constructions of non-intersection bodies see \cite{Ga}, \cite{Z}, \cite{K1},  \cite{GKS}, \cite{Ya}, \cite{Y}, \cite[Section 5.4.3]{R}.

\section{The planar case}\label{2D}

For $n = 2$,  Problem~\ref{GrunbaumProblem} has a positive answer.  In particular, the centroid of any triangle bisects three chords that are parallel to the sides. In general, the centroid $c(K)$ of any planar convex body $K \subset \mathbb{R}^2$ bisects at least three different chords of $K$. 
We provide a proof for completeness (see also \cite{B}).  Without loss of generality, we can choose the centroid of $K$ to be  the origin, that is,
\begin{equation*}
	\int_0^{2\pi} \rho^3(\theta) \cos(\theta) d\theta = 0 \quad \text{and} \quad  \int_0^{2\pi} \rho^3(\theta) \sin(\theta) d\theta = 0,
\end{equation*}
where the radial function $\rho$  of $K$ is given as a function of the polar angle $\theta$. Equivalently, 
\begin{equation}\label{eq::2d-centroid}
	\int_0^{\pi} f(\theta) \cos(\theta) d\theta = 0 \quad \text{and} \quad  \int_0^{\pi} f(\theta) \sin(\theta) d\theta = 0,
\end{equation}
where $f(\theta) = \rho^3(\theta) - \rho^3(\theta + \pi)$ is a continuous function on $\mathbb R$ with period  $2\pi$. Since $f(\theta) =- f(\theta+\pi)$,  there is at least one root $\theta_0\in [0, \pi)$ where  $f$ changes its sign. If such a root were unique then the function $f(\theta) \sin(\theta - \theta_0)$ would not change its sign in the interval $[0, \pi]$. However, this is impossible since  $\int_0^\pi f(\theta) \sin(\theta - \theta_0) d\theta = 0$ by \eqref{eq::2d-centroid}. Thus, $f$ has at least two distinct roots $\theta_0$ and $\theta_1$ ($\theta_0 < \theta_1$) on $[0, \pi
)$, where $f$ changes its sign. Note that $f$ cannot have exactly two such roots. If this were the case, $f$ would have the same sign on the intervals $(\theta_1-\pi, \theta_0)$ and $(\theta_1, \theta_0 + \pi)$ which is impossible since $f(\theta) = - f(\theta + \pi)$ for all $\theta$. Thus, $\rho(\theta) = \rho(\theta + \pi)$ for at least three distinct values in the interval $[0, \pi)$ which implies that $c(K)$ splits at least three chords in the corresponding directions in half.

\section{Main result}

\begin{theorem}\label{MainThm}
	There exists a convex body $K \subset \mathbb{R}^n$,  $n \geq 5$, with  centroid at the origin, such that 
	$$
	\langle c(K \cap \xi^{\perp}),e_n \rangle \geq 0 \quad \text{for any} \quad \xi \in S^{n-1},
	$$
	with equality if and only if $\xi = \pm e_n$, where $e_n = (0, \ldots,0,1)$.
\end{theorem}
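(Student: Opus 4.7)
The plan is to reduce the statement to a nonnegativity condition for a spherical Radon transform on $S^{n-1}$ and to invoke the existence of non-intersection bodies in dimensions $n\ge 5$.

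First I take $K$ to be a convex body of revolution around the $e_n$-axis with radial function $\rho_K(\theta)=R(\theta_n)$. A direct polar-coordinate computation shows that $\langle c(K\cap\xi^\perp),e_n\rangle$ has the same sign as $\int_{S^{n-1}\cap\xi^\perp}\theta_n R^n(\theta_n)\,d\theta$. Only the odd part $S_o$ of $R^n$ (as a function of $\theta_n$) contributes to this integral, so setting $T(t):=tS_o(t)$ --- an even function of $t$ with $T(0)=0$ --- the integral equals the spherical Radon transform $\mathcal{R}T(\xi):=\int_{S^{n-1}\cap\xi^\perp}T(\theta_n)\,d\theta$. By formula \eqref{FR} this is also $\pi^{-1}\widehat F(\xi)$ for the even, degree-$-(n-1)$ homogeneous function $F(x):=T(x_n/|x|)/|x|^{n-1}$. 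Thus the theorem is equivalent to producing a convex body of revolution $K$ centered at the origin such that $\mathcal{R}T\ge 0$ on $S^{n-1}$, with equality only at $\pm e_n$.

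The obstruction that restricts the result to $n\ge 5$ comes from reconciling centering with positivity. The centroid condition $c(K)=0$, namely $\int_{S^{n-1}}\theta_n R^{n+1}(\theta_n)\,d\theta=0$, reduces at the linearized level around an origin-symmetric body to the vanishing of the spherical mean $\int_{S^{n-1}}T\,d\theta$. A continuous nonnegative function on $S^{n-1}$ with vanishing spherical mean must be identically zero, so combined with $\mathcal{R}T\ge 0$ this would force $T\equiv 0$ and destroy the strict positivity of $\mathcal{R}T$ off $\{\pm e_n\}$. Hence $T$ must be sign-changing while $\mathcal{R}T$ stays nonnegative. But by Koldobsky's Fourier-analytic characterization of intersection bodies, an even positive function $\psi$ on $S^{n-1}$ arises as $\mathcal{R}T$ for a sign-changing $T$ if and only if $\psi$ is the radial function of an origin-symmetric \emph{non-intersection} convex body --- which exists precisely for $n\ge 5$ (in dimensions $n\le 4$ every origin-symmetric convex body is an intersection body, which would force $T\ge 0$ and contradict centering).

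The construction is then: fix $B$ an origin-symmetric convex body of revolution around $e_n$ in $\mathbb{R}^n$ ($n\ge 5$) that is a non-intersection body and whose radial function attains its minimum on $S^{n-1}$ exactly at $\pm e_n$ (for instance, a rotation-symmetrized $\ell_p^n$-type body with $p$ large and appropriately rescaled along $e_n$). Define $T$ by $\mathcal{R}T(\xi):=\rho_B(\xi)-c$ for the constant $c$ that makes $T(0)=0$; then $T$ is sign-changing by the non-intersection property and $\mathcal{R}T\ge 0$ on $S^{n-1}$ with equality only at $\pm e_n$. Set $S_o(t):=T(t)/t$ and then choose an even function $S_e(t)$ so that $R:=(S_e+S_o)^{1/n}$ is the radial function of a convex body of revolution with centroid at the origin. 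The main technical obstacle is establishing the existence of such $S_e$: convexity of $K$ requires $R$ to satisfy a second-order inequality, and $c(K)=0$ is a nonlinear constraint on $S_e$. Both are arranged by taking the perturbation $S_o$ small enough (i.e., $B$ close to a Euclidean ball) and using a continuity/implicit function argument in $S_e$. Crucially, $\mathcal{R}T=\rho_B-c$ depends only on $S_o$ and is insensitive to the choice of $S_e$, so the strict positivity of $\mathcal{R}T$ off $\{\pm e_n\}$ is preserved throughout the tuning of $S_e$.
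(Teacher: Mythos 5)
Your reduction is essentially the paper's framework: there the odd perturbation is $\varepsilon\,\theta_n\phi_\lambda=\varepsilon\,\widehat{g_\lambda}$, so your $T$ is $\varepsilon\,\widehat{g_\lambda}$ and your $\mathcal{R}T$ is a positive multiple of $g_\lambda$, a nonnegative function vanishing exactly at $\pm e_n$; and your observation that centering forces $T$ to change sign is correct. The gap is in the step you defer to ``a continuity/implicit function argument in $S_e$'' --- that step is the entire proof, and as you have set things up it fails. Writing $\rho=S_e^{1/n}$ for the radial function of the even part, the centering condition linearizes to $\int_{S^{n-1}}\rho\,T\,d\theta=0$, which by self-adjointness of $\mathcal{R}$ equals $\int_{S^{n-1}}(\mathcal{R}^{-1}\rho)\,\mathcal{R}T\,d\theta$. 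Since your $\mathcal{R}T=\rho_B-c\ge 0$ vanishes only at $\pm e_n$, this can vanish only if $\mathcal{R}^{-1}\rho$ is negative on a set where $\rho_B-c$ is large enough to cancel the positive part --- i.e.\ the \emph{even part of $K$} must itself be (essentially) a non-intersection body with its Fourier transform negative in the right place. You have instead placed the non-intersection hypothesis on the auxiliary body $B$ generating $T$; that only yields that $T$ changes sign, which is necessary but far from sufficient. In particular, if the even part stays close to a Euclidean ball (as you propose for convexity), then $\mathcal{R}^{-1}\rho$ is close to a positive constant and $\int\rho T\approx \mathrm{const}\cdot\int T>0$ (note $\int_{S^{n-1}}T=|S^{n-2}|^{-1}\int_{S^{n-1}}(\rho_B-c)>0$), so the centering equation has \emph{no} solution. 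Nothing in your argument shows that $S_e\mapsto\int S_e^{1/n}T$ takes negative values on radial functions of convex bodies of revolution for your fixed $T$, let alone while keeping $K=(S_e+S_o)^{1/n}$ convex. The paper resolves exactly this by fixing a strictly convex non-intersection body $M$ as the even part, with $\widehat{\rho_M}(\pm e_n)<0$, and interpolating $\mathcal{R}T$ through a family $g_\lambda$ between a function concentrated where $\widehat{\rho_M}<0$ (centering integral negative) and a function $H$ with $\widehat H\ge 0$ (centering integral positive by Parseval), closing with the intermediate value theorem; your proposal has no analogue of this sign change.

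Two smaller points. First, your claim that for $n\le 4$ the scheme is blocked because every symmetric convex body is an intersection body ``which would force $T\ge 0$'' is not a valid deduction: sign-changing $T$ with $\mathcal{R}T\ge 0$ exist in every dimension (the relevant positivity is of a function, not the radial function of a convex body), so the true obstruction is not where you locate it --- consistent with the paper only claiming its construction fails for $n=3,4$. Second, your single constant $c$ is asked to do two jobs ($T(0)=0$ and $c=\min\rho_B$ attained at $\pm e_n$); these happen to coincide because $\mathcal{R}T(e_n)=|S^{n-2}|\,T(0)$, but this needs to be said, and you must still exhibit a non-intersection convex body of revolution whose radial function is minimized exactly at $\pm e_n$ (the paper's $M$ has its \emph{maximum} there, so it cannot serve as your $B$).
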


We begin with an auxiliary construction. For a small  $a >0$, we define an origin-symmetric non-intersection convex body of revolution $M$ by
$$
\|x\|^{-1}_{M}= |x|^{-1} - 2 a^{n-2} \left(x_1^2 + \cdots +  x_{n-1}^2 + \frac{x_n^2}{a^2}\right)^{-\frac12}, \quad \text{for }   x \in \mathbb{R}^n\setminus\{0\}.
$$
This is a particular case of the construction from \cite[Theorem 2.2]{Y}.
We note that $|x|^{-1} > \left(x_1^2 + \cdots +  x_{n-1}^2 + \frac{x_n^2}{a^2}\right)^{-\frac12}$ for small $a >0$ and all $x \in \mathbb{R}^n\setminus\{0\}$, so $\|x\|^{-1}_{M} > 0$ on $ \mathbb{R}^n\setminus\{0\}$  and $M$ is well-defined.

\begin{lemma} \label{M_strictly_convex}
	$M$ is a   convex body   with strictly positive curvature when $a$ is sufficiently small.
\end{lemma}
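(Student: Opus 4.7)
The plan is to show that the radial function $\rho_M$ is a small $C^2(S^{n-1})$-perturbation of the constant function $1$ when $n \ge 5$, and then to invoke the continuous dependence of the principal curvatures of a star body on the $C^2$-jet of its radial function. Since all principal curvatures of the unit ball equal $1$, a sufficiently small $C^2$-perturbation remains strictly convex with strictly positive Gauss curvature. The starting observation is that on the unit sphere
$$
\rho_M(\xi) = \|\xi\|_M^{-1} = 1 - g(\xi), \qquad g(\xi) := 2a^{n-2}\bigl(\xi_1^2+\cdots+\xi_{n-1}^2 + a^{-2}\xi_n^2\bigr)^{-1/2},
$$
and by rotational symmetry I may regard $g$ as a function of the polar angle $\theta \in [0,\pi]$ measured from $e_n$, namely $g(\theta) = 2a^{n-1}\big/\sqrt{a^2\sin^2\theta + \cos^2\theta}$.

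Next I would bound $g, g', g''$ uniformly on $[0,\pi]$. Setting $\phi = \theta - \pi/2$ gives $g(\phi) = 2a^{n-1}\bigl(a^2 + (1-a^2)\sin^2\phi\bigr)^{-1/2}$, a bump of height $\sim a^{n-2}$ concentrated at the scale $\phi \sim a$. The rescaling $s = \phi/a$ exhibits the local profile $g \approx 2a^{n-2}/\sqrt{1+s^2}$, so every $\theta$-differentiation contributes a factor $a^{-1}$, and a short calculation yields
$$
\|g\|_\infty = O(a^{n-2}), \qquad \|g'\|_\infty = O(a^{n-3}), \qquad \|g''\|_\infty = O(a^{n-4}).
$$
For $n \ge 5$ all three tend to $0$ as $a \to 0^+$, so $\rho_M \to 1$ in $C^2(S^{n-1})$.

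Finally I would cash in this $C^2$-smallness via the body-of-revolution structure. The planar profile curve $\gamma(\theta) = \rho_M(\theta)(\sin\theta, \cos\theta)$ has polar curvature
$$
\kappa(\theta) = \frac{\rho_M^2 + 2(\rho_M')^2 - \rho_M\rho_M''}{\bigl(\rho_M^2 + (\rho_M')^2\bigr)^{3/2}} = 1 + O(\|g\|_{C^2}),
$$
which is strictly positive for small $a$ and equals the meridional principal curvature of $\partial M$. The outward normal has radial component $\rho_M\sin\theta - \rho_M'\cos\theta = \sin\theta + O(\|g\|_{C^1})$, whose quotient by the distance to the axis of revolution gives the $(n-2)$-fold parallel principal curvatures; these are positive on $(0,\pi)$ and extend to positive values at the poles by smoothness and L'H\^opital. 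Hence every principal curvature of $\partial M$ is strictly positive, so $M$ is a convex body with strictly positive Gauss curvature. The main obstacle is precisely the concentration of $g''$ near the equator $\theta = \pi/2$: the rescaling pins its $L^\infty$-decay down to exactly $a^{n-4}$, which is why the threshold $n \ge 5$ is the natural one for this direct approach.
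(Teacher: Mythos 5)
Your proof is correct and follows essentially the same route as the paper's: the paper likewise treats $M$ as a $C^2$-small perturbation of the Euclidean ball and observes that the first and second derivatives of the perturbation term are of order $a^{n-4}$, hence small for $n\ge 5$. Your write-up merely supplies the details (the rescaling near the equator and the explicit principal-curvature formulas for a body of revolution) that the paper leaves as a ``standard perturbation argument.''
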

\begin{proof}	This is a standard perturbation argument; cf. \cite[p.~96]{K2}.
	To show that $M$ has  strictly positive curvature, observe that $M$ is a small perturbation of the Euclidean ball whose curvature is 1 everywhere. We can control the first and second derivatives of the perturbation function $2 a^{n-2} \left(x_1^2 + \cdots +  x_{n-1}^2 + \frac{x_n^2}{a^2}\right)^{-\frac12}$. These derivatives are of order $a^{n-4}$ which are  small for sufficiently small $a>0$ (since $n\geq 5$). Hence, $M$ has strictly positive curvature.
\end{proof}

From now on we will fix a value of $a$ for which Lemma \ref{M_strictly_convex} is true.

\begin{lemma} \label{defM}
$M$ is not an intersection body. In particular,	$\left(\|\cdot\|^{-1}_{M}\right)^\wedge(\pm e_n) < 0$.
\end{lemma}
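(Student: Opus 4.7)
The plan is to compute $\bigl(\|\cdot\|_M^{-1}\bigr)^\wedge$ explicitly on the whole space (not just on the sphere) by applying the two formulas (\ref{EuclNorm}) and (\ref{LinTran}) term by term, and then evaluate at $\pm e_n$.

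First I would recognize the second term in the definition of $\|x\|_M^{-1}$ as a linearly transformed Euclidean norm: set $T=\mathrm{diag}(1,\dots,1,1/a)$, so that $|Tx|^{2}=x_1^2+\cdots+x_{n-1}^2+x_n^2/a^2$, and hence
\[
\|x\|_M^{-1}=|x|^{-1}-2a^{n-2}|Tx|^{-1}.
\]
Since both summands are $C^\infty$ away from the origin and homogeneous of degree $-1$, their Fourier transforms are ordinary functions on $\mathbb R^n\setminus\{0\}$, so we may simply apply (\ref{EuclNorm}) and (\ref{LinTran}) and subtract. Note that $\det T=1/a$, so $|\det T|^{-1}=a$, while $T^{-t}=\mathrm{diag}(1,\dots,1,a)$, giving $|T^{-t}\xi|^2=\xi_1^2+\cdots+\xi_{n-1}^2+a^2\xi_n^2$.

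Next I would evaluate at $\xi=\pm e_n$. There $|\xi|=1$, so $\bigl(|\cdot|^{-1}\bigr)^\wedge(\pm e_n)=c_n$, while $|T^{-t}(\pm e_n)|=a$ gives $\bigl(|T\cdot|^{-1}\bigr)^\wedge(\pm e_n)=c_n\cdot a\cdot a^{-n+1}=c_n a^{-n+2}$. Combining,
\[
\bigl(\|\cdot\|_M^{-1}\bigr)^\wedge(\pm e_n)=c_n-2a^{n-2}\cdot c_n a^{-n+2}=c_n-2c_n=-c_n<0.
\]
The coefficient $2a^{n-2}$ in the construction of $M$ was engineered precisely so that the two contributions at $\pm e_n$ have comparable size, with the perturbation dominating by a factor of $2$.

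Finally I would invoke Koldobsky's Fourier characterization of intersection bodies recalled in the preliminaries: an origin-symmetric star body $K$ is an intersection body in $\mathbb R^n$ if and only if $\|\cdot\|_K^{-1}$ is a positive definite distribution, i.e.\ $\bigl(\|\cdot\|_K^{-1}\bigr)^\wedge\ge 0$. Since we have exhibited a point where $\bigl(\|\cdot\|_M^{-1}\bigr)^\wedge$ is strictly negative, $M$ is not an intersection body. There is no real obstacle here; the content is just the clean cancellation at the poles, which is the whole point of the construction.
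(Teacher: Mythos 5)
Your proposal is correct and follows essentially the same route as the paper: apply (\ref{EuclNorm}) and (\ref{LinTran}) with $T=\mathrm{diag}(1,\dots,1,1/a)$ to get $\bigl(\|\cdot\|_M^{-1}\bigr)^\wedge(x)=c_n\bigl(|x|^{-n+1}-2a^{n-1}(x_1^2+\cdots+x_{n-1}^2+a^2x_n^2)^{(-n+1)/2}\bigr)$ and evaluate at $\pm e_n$ to obtain $-c_n<0$. Your explicit appeal to Koldobsky's positive-definiteness characterization to conclude that $M$ is not an intersection body is left implicit in the paper but is exactly the intended deduction.
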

\begin{proof}

To compute the Fourier transform of $\|x\|^{-1}_{M}$ we use formulas (\ref{EuclNorm}) and (\ref{LinTran}):
$$
\left(\|\cdot\|^{-1}_{M}\right)^\wedge(x) = c_{n} \left( |x|^{-n+1} - 2a^{n-1} \left(x_1^2 + \cdots +  x_{n-1}^2 + a^2 x_n^2\right)^{\frac{-n+1}2}\right),
$$
where $c_n$ is the positive constant defined in \eqref{cn}.
It remains to observe that $$\left(\|\cdot\|^{-1}_{M}\right)^\wedge(\pm e_n) = - c_n < 0.$$
\end{proof}

\noindent {\it Proof of Theorem \ref{MainThm}.}
By Lemma~\ref{defM}, $\left(\|\cdot\|^{-1}_M\right)^{\wedge}(e_n) < 0$. Since $\left(\|\cdot\|^{-1}_M\right)^{\wedge}$ is continuous on the sphere $S^{n-1}$, there is    an open spherical ball  $\Omega(e_n) \subset S^{n-1}$  centered at $e_n$ such that $\left(\|\cdot\|^{-1}_M\right)^{\wedge}(\xi) < 0$ for any $\xi \in \Omega(e_n).$ Define $\Omega(-e_n)$ to be the reflection of $\Omega(e_n)$ with respect to the center of the sphere. Let  $G \in C^{\infty}(S^{n-1})$ be an even function that is invariant under rotations about the $x_n$-axis and that satisfies the following properties:
$$
G(\xi)  = \begin{cases}
\text{positive}, \quad \xi \in \Omega(\pm e_n) \backslash \{\pm e_n\};\\
0, \quad \xi \in \{\pm e_n\} \cup S^{n-1} \backslash \Omega(\pm e_n).
\end{cases}
$$
By construction,
\begin{equation}\label{IntG}
\int_{S^{n-1}} \left(\|\cdot\|^{-1}_M\right)^{\wedge}(\xi) G(\xi) d\xi < 0.
\end{equation}
Next, we define the following function $H \in C^{\infty}(S^{n-1})$ that is invariant under rotations about the $x_n$-axis:
$$
H(x) = |x|^{-1} - (4(x_1^2 + \dots + x_{n-1}^2) + x_n^2)^{-\frac12}.
$$
Note that $H(x) >0$   when $x\in S^{n-1}\setminus\{\pm e_n\}$, and $H(\pm e_n) = 0$.  Extending $H$ to $\mathbb R^n$ as a homogeneous function of degree $-1$ and using formulas (\ref{EuclNorm}) and (\ref{LinTran}), we obtain

\begin{align*}
	\widehat{H}(x) &= c_n \left(|x|^{-n+1} - \frac1{2^{n-1}} \left(\frac{x_1^2 + \dots + x_{n-1}^2}4 + x_n^2\right)^{\frac{-n+1}2}\right)\\
	&=c_n \left(|x|^{-n+1} - \left(x_1^2 + \dots + x_{n-1}^2 + 4x_n^2\right)^{\frac{-n+1}2}\right).
\end{align*}
 Observe that $\widehat{H} (x) \ge 0$ for $x\in \mathbb R^n\setminus\{0\}$.  
Hence, using the spherical Parseval formula (\ref{Parseval}), we get
\begin{equation}\label{IntH}
 \int_{S^{n-1}} \left(\|\cdot\|^{-1}_M\right)^{\wedge}(\xi) H(\xi) \, d\xi =  \int_{S^{n-1}} \|\xi\|^{-1}_M \widehat{H}(\xi)\, d\xi > 0.
\end{equation}
Finally, for $\lambda\in [0,1]$   we define 
$$
g_\lambda = (1-\lambda)G + \lambda H.
$$
Since $G(x)\ge 0$ and $H(x)>0$ for all $\xi\in S^{n-1}\setminus\{\pm e_n\}$, and $G(\pm e_n)=H(\pm e_n)=0$, we 
see that for all $\lambda\in (0,1]$ the function $g_{\lambda}$ satisfies the  properties: $g_{\lambda}(\xi) > 0$ for  all $\xi\in S^{n-1}\setminus\{\pm e_n\}$, and $g_\lambda(\pm e_n) = 0$.

Since $G$ and $H$ are $C^\infty$ functions of the sphere, so is $g_{\lambda}$. Extending $g_{\lambda}$ to  $ \mathbb R^n\setminus\{0\}$ as a homogeneous function of degree $-1$ and denoting  the Fourier transform of this extension by $\widehat{g_\lambda}$, we define a function $\phi_\lambda$ on $S^{n-1}$ by the formula
$$\phi_\lambda(\xi) = \frac{1}{\xi_n} \widehat{g_\lambda}(\xi), \qquad \xi \in S^{n-1},$$
where $\xi_n = \langle \xi, e_n \rangle$.
Note that $\phi_\lambda$ does not have a singularity when $\xi_n=0$, since $\widehat{g_\lambda}(\xi)=0$ for all $\xi\in e_n^\perp$. To show the latter, observe that $\widehat{g_\lambda}$  is rotationally invariant as $g_\lambda$ has the same property. Therefore for $\xi\in e_n^\perp$ we have
\begin{align*}
	\widehat{g_{\lambda}}(\xi)& =   \frac{1}{|S^{n-2}|}\int_{S^{n-1} \cap e_n^{\perp}} \widehat{g_{\lambda}}(\xi) \, dx = \frac{1}{|S^{n-2}|}\int_{S^{n-1} \cap e_n^{\perp}} \widehat{g_{\lambda}}(x) \, dx \\
	&= \frac{1}{\pi |S^{n-2}|} \, \widehat{\widehat{g_{\lambda}}}(e_n) = \frac{(2\pi)^n}{\pi |S^{n-2}|}\, g_\lambda(e_n)=0,
\end{align*}
where we used formula (\ref{FR}).

Since $\widehat{g_{\lambda}}$ is even, infinitely smooth, and vanishes on $e_n^\perp$, we can define $\phi_\lambda$ to be zero on $e_n^\perp$. The  function   $\phi_\lambda$  constructed this way is odd and infinitely smooth. Also observe that the first and second derivatives of $\widehat{g_{\lambda}}$ are bounded on the sphere, uniformly in $\mathbb \lambda\in[0,1]$. This is due to the fact that $\phi_\lambda$ is linear in $\lambda$.

For a small $\varepsilon>0$ define a body $K$ as follows:
$$\rho_K(\xi) = \left(\rho_M^n(\xi)+ \varepsilon \phi_\lambda (\xi)\right)^{1/n}, \qquad \xi\in S^{n-1}.$$
Since $M$ has strictly positive curvature and the derivatives of $\phi_\lambda$ are bounded on the sphere, uniformly in $\lambda$, we conclude that there is a small $\alpha>0$ such that the body $K$ is convex  for all  $\varepsilon \in [0,\alpha]$ and all $\lambda \in [0,1]$. Additionally, $K$ is a body of revolution about the $x_n$-axis.

Now observe that the centroids of sections $K \cap \xi^{\perp}$ all lie in the open half-space $\{x \in \mathbb{R}^n: \, x_n > 0\}$, unless $\xi=\pm e_n$. Indeed, for all $\xi\in S^{n-1}\setminus\{\pm e_n\}$ we have
\begin{align*}
|K \cap \xi^{\perp}| \,\langle c(K\cap \xi^{\perp}),e_n\rangle&  = \int_{K \cap \xi^{\perp}} x_n\, dx\\ 
&=\frac1n\int_{S^{n-1} \cap \xi^{\perp}} x_n   \rho_K^n(x)  \, dx \\
&=\frac1n\int_{S^{n-1} \cap \xi^{\perp}} x_n \left (\rho_M^n(x)+ \varepsilon \phi_\lambda (x) \right) \, dx \\
& =\frac 1n \int_{S^{n-1} \cap \xi^{\perp}} x_n \varepsilon \phi_\lambda(x) \, dx \\
&= \frac{\varepsilon}{\pi n}\left(x_n   \phi_\lambda(x)\right)^\wedge (\xi) = \frac{\varepsilon (2\pi)^n}{\pi n} g_\lambda(\xi)> 0,
\end{align*}
where we used the origin-symmetry of $M$ and formula (\ref{FR}). Note that the latter strict inequality holds for all $\varepsilon>0$
 and all $\lambda\in (0,1]$. 

Now we will choose $\lambda$ and $\varepsilon$ so that  the centroid of $K$ is at the origin. Since $K$ is a body of revolution, it is enough to ensure  that the quantity below can  be equal to  zero.
\begin{align*}
	|K| \, \langle c(K), e_n \rangle & = \int_K x_n \, dx= \frac{1}{n+1} \int_{S^{n-1}} x_n \rho_K^{n+1}(x) \, dx \\
	& = \frac{1}{2(n+1)} \int_{S^{n-1}} x_n\left(\rho_K^{n+1}(x)-\rho_K^{n+1}(-x)\right) \, dx\\
	&=\frac{1}{2(n+1)} \int_{S^{n-1}} \left(\frac{\rho_K^{n+1}(x)-\rho_K^{n+1}(-x)}{\varepsilon \phi_\lambda(x) }\right)   x_n\varepsilon \phi_\lambda(x) \, dx.
\end{align*}

Since $\phi_\lambda$ is odd and $M$ is origin-symmetric, we have
\begin{align*}
\frac{\rho_K^{n+1}(x)-\rho_K^{n+1}(-x)}{\varepsilon \phi_\lambda(x) } &= \frac{\left(\rho_M^n(x)+\varepsilon \phi_\lambda(x) \right)^{\frac{n+1}{n}}-\left(\rho_M^n(-x)+\varepsilon \phi_\lambda(-x) \right)^{\frac{n+1}{n}}}{\varepsilon \phi_\lambda(x) }\\
&=\frac{\left(\rho_M^n(x)+\varepsilon \phi_\lambda(x) \right)^{\frac{n+1}{n}}-\left(\rho_M^n(x)-\varepsilon \phi_\lambda(x) \right)^{\frac{n+1}{n}}}{\varepsilon \phi_\lambda(x) }\\
&=\rho_M^{n+1} (x)\frac{\left(1+\varepsilon \phi_\lambda(x)\rho_M^{-n}(x) \right)^{\frac{n+1}{n}}-\left(1-\varepsilon \phi_\lambda(x) \rho_M^{-n}(x)\right)^{\frac{n+1}{n}}}{\varepsilon \phi_\lambda(x) }\\
&=\rho_M^{n+1} (x)\frac{\frac{2(n+1)}{n}\varepsilon \phi_\lambda(x) \rho_M^{-n}(x) +\varepsilon^3 \phi_\lambda^3(x) R_1(x)}{\varepsilon \phi_\lambda(x)  }\\
&=  \frac{2(n+1)}{n} \left(\rho_M (x) +\varepsilon^2   R (x) \right),
\end{align*}
where $R_1$ is  the  remainder in the Taylor expansion in $\varepsilon$ of the corresponding expression above, and $R(x)= \frac{n}{2(n+1)}\rho_M^{n+1} (x)\phi_\lambda^2(x) R_1(x)$.

Thus,
\begin{align*}
 |K| \, \langle c(K), e_n \rangle &=\frac{1}{n} \int_{S^{n-1}} \left( \rho_M (x) +\varepsilon^2   R (x)\right)  x_n\varepsilon \phi_\lambda(x) \, dx \\
 &=\frac{\varepsilon}{n} \int_{S^{n-1}} \left( \rho_M (x) +\varepsilon^2   R (x)\right) \widehat{g_\lambda}(x) \, dx \\
  &=\frac{\varepsilon}{n} \int_{S^{n-1}} \left( \widehat{\rho_M} (\xi) +\varepsilon^2\widehat{R} (\xi)\right) {g_\lambda}(\xi) \, d\xi,
\end{align*}
where we used the spherical Parseval formula (\ref{Parseval}).

Define
$$F(\lambda,\varepsilon)=\int_{S^{n-1}} \left( \widehat{\rho_M} (\xi) +\varepsilon^2\widehat{R} (\xi)\right) {g_\lambda}(\xi) \, d\xi.$$
Note that all derivatives of $R$ depend continuously on $\lambda$ and $\varepsilon$. Therefore, by \cite[Lemma 3.16]{K2}, $\widehat{R}$ is also a continuous function of $\lambda$ and $\varepsilon$, and hence so is 
 $F$ for $(\lambda,\varepsilon)\in [0,1]\times [0,\alpha].$

Observe that  $$F(\lambda,0)=\int_{S^{n-1}} \widehat{\rho_M}(\xi)  g_{\lambda}(\xi) \, d\xi$$
is a sign-changing function of  $\lambda\in [0,1]$. Indeed, 
$$F(0,0)=\int_{S^{n-1}}  \widehat{\rho_M}(\xi)   G(\xi) \, d\xi<0$$
and 
$$F(1,0)=\int_{S^{n-1}}  \widehat{\rho_M}(\xi)  H(\xi) \, d\xi>0$$
by (\ref{IntG}) and (\ref{IntH}) respectively.

Since $F(\lambda, \varepsilon)$ is a continuous function on $[0,1] \times [0, \alpha]$, there exist a small $\varepsilon_0>0$ and $\lambda_0 \in (0,1)$ such that $F(\lambda_0, \varepsilon_0) = 0$, as required.

\qed

\begin{remark}
	The above construction does not work when $n=3$ and $n=4$ since there are no non-intersection bodies in these dimensions. We suspect that $\mu(3)=\mu(4)=1$, but the construction has to be more delicate.
	\end{remark}
	
\begin{remark}
	It is natural to ask about the analogues of Problems \ref{GrunbaumProblem} and \ref{LoewnerProblem} for $k$-dimensional sections, $1 \leq k \leq n-2$. However, for any convex body \( K \subset \mathbb{R}^n \), \( n \geq~3 \), there exist infinitely many \( k \)-dimensional sections of \( K \) whose centroids coincide with \( c(K) \). Indeed, if there were a convex body \( K \) with only finitely many such \( k \)-dimensional sections, then there would exist a \( (k+1) \)-dimensional affine subspace \( V \) passing through \( c(K) \) that does not contain any of these sections. According to \cite{Gr1}, \( K \cap V \) would then have a \( k \)-dimensional section with centroid at \( c(K) \), leading to a contradiction.

\end{remark}

	\begin{remark}
This work continues a recent trend of studying relative positions of various centroids; see e.g., \cite{MTY} and \cite{NRY}. In \cite{NRY} the authors found an optimal upper bound on the distance between the centroid of a planar body and its boundary. The corresponding question is open in $\mathbb R^n$, $n\ge 3$. 
In \cite{MTY} the authors found an optimal upper bound on the distance between a projection of the centroid of a convex body $K$ and the centroid of the projection of $K$. An analogue of the latter question for sections  is currently open. It was stated in \cite{S} and  motivated by Gr\"unbaum-type inequalities; see \cite{Gr}, \cite{FMY}, \cite{SZ}, \cite{MNRY}, \cite{MSZ}.

	\end{remark}


\begin{thebibliography}{9}	

\bibitem[B]{B}
L. E. Bush, {\em The William Lowell Putnam Mathematical Competition}, The American Math. Monthly, Vol. \textbf{71}, No. 6 (Jun. -- Jul., 1964), pp. 634--641.

	\bibitem[CFG]{CFG} H. T. Croft, K. J. Falconer, R. K. Guy, {\em Unsolved Problems in Geometry},  Problem Books in Mathematics, Springer-Verlag, New York, 1991, Unsolved Problems in Intuitive Mathematics,~II.
	
	\bibitem[E]{E} G. I. Eskin, {\em Boundary Value Problems for Elliptic Pseudodifferential Equations}, Amer.
	Math. Soc., Providence, R.I., 1981.

\bibitem[F]{F} W. Fenchel (ed.), Proceedings of the Colloquium on Convexity, 1965, Kobenhavns Univ. Mat. Inst., 1967.

\bibitem[FMY]{FMY}M. Fradelizi, M. Meyer, V. Yaskin, {\em On the volume of sections of a convex body by cones}, Proc. Amer. Math. Soc. \textbf{145} (2017), no. 7, 3153--3164.

	\bibitem[Ga]{Ga} R. J. Gardner, {\em Intersection bodies and the Busemann-Petty problem}, Trans. Amer. Math. Soc. {\bf 342} (1994), 435--445.
	\bibitem[GKS]{GKS} R. J. Gardner, A. Koldobsky, T. Schlumprecht, {\em An analytic solution to the Busemann-Petty problem on sections of convex bodies}, Ann. of Math. {\bf 149} (1999), 691--703.
	
	\bibitem[GS]{GS} I.~M.~Gelfand, G.~E.~Shilov, {\em Generalized functions, vol. 1, Properties and Operations}, Academic Press, New York and London, 1964.
	

	
\bibitem[GLW]{GLW} P.~Goodey, E.~Lutwak, W.~Weil,  
{\em Functional analytic characterizations of classes of convex bodies},
Math. Z. {\bf 222} (1996), no. 3, 363--381.

 	\bibitem[GYY]{GYY} P. Goodey, V. Yaskin, M. Yaskina, {\em Fourier Transforms and the Funk-Hecke Theorem in Convex Geometry}, J. London Math. Soc. (2) {\bf80} (2009), 388--404.
	
	\bibitem[Gr1]{Gr} B. Gr\"unbaum, {\em Partitions of mass-distributions and of convex bodies by hyperplanes}, Pacific J. Math. \textbf{10} (1960), 1257--1261.
	
	\bibitem[Gr2]{Gr1} B. Gr\"unbaum, {\em On some properties of convex sets}, Colloq. Math. {\bf 8} (1961), 39--42.
	\bibitem[Gr3]{Gr2} B. Gr\"unbaum, {\em Measures of symmetry for convex sets},  	Proc. Sympos. Pure Math., Vol. VII, 233--270, American Mathematical Society, Providence, RI, 1963.
	
	\bibitem[K1]{K1}  A.~Koldobsky, 	{\em Intersection bodies, positive definite distributions, and the Busemann-Petty problem}, Amer. J. Math. {\bf 120} (1998), no. 4, 827--840.
	
	\bibitem[K2]{K2}  A.~Koldobsky, Fourier Analysis in Convex Geometry, American Mathematical Society, Providence RI, 2005.
	\bibitem[L]{L} E. Lutwak, \textit{Intersection bodies and dual mixed volumes}, Adv. Math. {\bf 71} (1988), 232--261.
	
	\bibitem[MNRY]{MNRY}M. Meyer, F. Nazarov, D. Ryabogin, V. Yaskin,
	{\em Gr\"unbaum-type inequality for log-concave functions}, Bull. Lond. Math. Soc. \textbf{50} (2018), no. 4, 745--752.
	
	\bibitem[MR]{MR} M. Meyer, S. Reisner, {\em Characterizations of ellipsoids by section-centroid location}, Geometriae Dedicata \textbf{31} (1989), 345--355.
	
	
		\bibitem[MTY]{MTY} {S. Myroshnychenko, K.~Tatarko, V. Yaskin}, {\em How far apart
		can the projection of the centroid of a convex body and the centroid
		of its projection be?}, Math. Ann. \textbf{390} (2024), 1155--1169.
		
		\bibitem[MSZ]{MSZ}S. Myroshnychenko, M. Stephen, N. Zhang, {\em Gr\"unbaum's inequality for sections}, J. Funct. Anal. \textbf{275} (2018), no. 9, 2516--2537.
	
		\bibitem[NRY]{NRY}{F.~Nazarov, D.~Ryabogin, V.~Yaskin}, {\em On the maximal distance between the centers of mass of a planar convex body and its boundary}, Discrete \& Computational Geometry (2024).
		
		\bibitem[PTW]{PTW} Z. Pat\'akov\'a, M. Tancer, U. Wagner, {\em Barycentric cuts through a convex body}, Discrete \& Computational Geometry \textbf{68} (2022), 1133--1154.
		
		\bibitem[R]{R} {B. Rubin}, {\em Introduction to Radon transforms (with elements of fractional calculus and
		harmonic analysis)}, Encyclopedia Math. Appl., \textbf{160}, Cambridge
		University Press, 2015.
		
		\bibitem[SZ]{SZ} {M. Stephen, N. Zhang}, {\em Gr\"unbaum's inequality for projections}, J. Funct. Anal. \textbf{272} (2017), no. 6, 2628--2640.
	
 
\bibitem[S]{S}	M. Stephen, {\em Some problems from convex geometry and geometric tomography}, PhD thesis, University of Alberta, 2018, 140 pp.


\bibitem[Y]{Y} V. Yaskin, {\it On strict inclusions in hierarchies of convex bodies}, Proc. Amer. Math.
Soc.,  {\bf 136}  (2008), 3281--3291.


\bibitem[Ya]{Ya} M. Yaskina, {\it Non-intersection bodies, all of whose central sections are intersection bodies},
Proc. Amer. Math. Soc. {\bf  135} (2007), 851--860.

\bibitem[Z]{Z} G. Zhang, {\em Intersection bodies and Busemann-Petty inequalities in $\mathbb R^4$}, Ann. of Math. {\bf 140} (1994), 331--346.


\end{thebibliography}
\end{document}